\documentclass[11pt, a4paper]{amsart}

\usepackage[utf8]{inputenc}
\usepackage{lmodern}
\usepackage[T1]{fontenc}
\usepackage[english]{babel}
\usepackage{fullpage}

\usepackage{latexsym}
\usepackage{amsmath,amssymb,amsthm}
\usepackage{mathrsfs}
\usepackage{tcolorbox}
\usepackage{tikz-cd}
\usepackage{enumerate}
\usepackage{soul} 


\usepackage{lineno}


\usepackage{amsthm}
\usepackage{xpatch}
\xpatchcmd{\proof}{\itshape}{\normalfont\proofnamefont}{}{}

\newcommand{\proofnamefont}{\itshape} 

\usepackage{todonotes}

\newcommand{\pref}[2]{\hyperref[#2]{#1 \ref*{#2}}}

\newcommand{\bq}{/ \hspace{-.12cm} /}

\newcommand{\Mb}{M\hspace{-1.7pt}b}


\usepackage{hyperref}
\hypersetup{
	colorlinks=true,
	linkcolor=blue,
	citecolor=blue,
	urlcolor=blue
}

\numberwithin{equation}{section} 


\newtheorem{theoremAlph}{Theorem}

 
\newtheorem{theorem}{Theorem}[section]
\newtheorem{lemma}[theorem]{Lemma}
\newtheorem{proposition}[theorem]{Proposition}

\theoremstyle{definition}


\theoremstyle{remark}
\newtheorem{remark}[theorem]{Remark}

\newtheorem*{ack}{Acknowledgements}

\newcommand{\C}{\mathbb{C}}

\newcommand{\R}{\mathbb{R}}

\newcommand{\Z}{\mathbb{Z}}

\DeclareMathOperator{\Ric}{Ric}
\DeclareMathOperator{\ric}{ric}


\begin{document}
	\author[G.~Frenck]{Georg Frenck$^{*}$}
	\address[Frenck]{Institut f\"ur Algebra und Geometrie, Karlsruher Institut f\"ur Technologie (KIT), Germany.}
	\email{\href{mailto:georg.frenck@kit.edu}
		{georg.frenck@kit.edu}}
	\email{\href{mailto:math@frenck.net}
		{math@frenck.net}}
	\urladdr{\url{http://frenck.net/Math}}

	
	\author[F.~Galaz-Garc\'ia]{Fernando Galaz-Garc\'ia$^{*\dagger}$}
	\address[Galaz-Garc\'ia]{Department of Mathematical Sciences, Durham University, United Kingdom.}
	\email{\href{mailto:fernando.galaz-garciaa@durham.ac.uk}{fernando.galaz-garcia@durham.ac.uk}}

	
	\author[P.~Reiser]{Philipp Reiser$^{*\dagger}$}
	\address[Reiser]{Institut f\"ur Algebra und Geometrie, Karlsruher Institut f\"ur Technologie (KIT), Germany and Department of Mathematical Sciences, Durham University, United Kingdom.}
	\email{\href{mailto:philipp.reiser@kit.edu}{philipp.reiser@kit.edu}}

	\thanks{$^{*}$The authors acknowledge funding by the Deutsche Forschungsgemeinschaft (DFG, German Research Foundation) -- 281869850 (RTG 2229).}
	\thanks{$^{\dagger}$Received support from the Deutsche Forschungsgemeinschaft grant GA 2050 2-1 within the SPP 2026 ``Geometry at Infinity''.}

	\title[Cohomogeneity One Manifolds and Homogeneous Spaces of PSC]{Cohomogeneity One Manifolds and Homogeneous Spaces of Positive Scalar Curvature}
	\date{\today}


	\subjclass[2010]{53C20, 57S15}
	\keywords{homogeneous space, cohomogeneity one manifold, positive scalar curvature}

	
	\begin{abstract}
		We characterize cohomogeneity one manifolds and homogeneous spaces with a compact Lie group action admitting an invariant metric with positive scalar curvature.
	\end{abstract}

	\maketitle
	
	\section{Main Results}
	  Whether a given smooth manifold admits a complete Riemannian metric of positive scalar curvature is a long-standing problem in Riemannian geometry. For closed (i.e.\ compact and without boundary) simply-connected manifolds of dimension at least 5 this question has been answered by Gromov--Lawson \cite{GL80} and Stolz \cite{St92}. For non-simply-connected manifolds, however, the problem is still open in many cases (see, for example, the surveys \cite{Wa2017,Wa2018} by Walsh). Under symmetry assumptions, Lawson and Yau  \cite{LY74} showed that any closed smooth manifold $M$ with a smooth (effective) action of a connected, compact, non-abelian Lie group $G$ supports an invariant Riemannian metric of positive scalar curvature. Further existence results for manifolds with circle actions have been obtained by Hanke \cite{Ha08} and Wiemeler \cite{Wi16}. Note that the orbit space of a smooth effective circle action on an $n$-manifold, $n\geq 1$, has dimension $n-1$. Thus, one generally thinks of manifolds with circle actions as having high-dimensional orbit spaces. In this note we consider the opposite situation, namely, manifolds with compact Lie group actions whose orbit space is zero- or one-dimensional, and characterize such manifolds admitting positive scalar curvature.
	
	
	Recall that a smooth manifold is a \emph{cohomogeneity one manifold} if it admits an effective, smooth action of a compact Lie group $G$ and the orbit space $M/G$ of this action is one-dimensional. These manifolds were first studied by Mostert in \cite{Mo57} (see also \cite{GGZ18,Mo57b,Ne68}) and play an important role in differential geometry (see, for example, \cite{GWZ08,GZ00,GZ02, Hoelscher}).  When $M$ is closed, $M/G$ is homeomorphic to $S^1$ or $[-1,1]$. Let $T^k$ denote the $k$-dimensional torus, $K$ the Klein bottle, and let $A$ be the manifold $(\Mb\times S^1)\cup_{\partial} (S^1\times \Mb)$, where $\Mb$ denotes the M\"obius band. It follows from \cite{Ne68} (or from Theorem \ref{theorem_cohomOne_cpt} below) that $K\times S^1$, $A$, and $T^3$ are the only closed smooth $3$-manifolds admitting a flat Riemannian metric with an effective isometric $T^2$ action of cohomogeneity one. We get the following characterization  of closed cohomogeneity one manifolds with positive scalar curvature. 
	
	
	\begin{theoremAlph}
		\label{theorem_cohomOne_cpt}
		Let $M$ be a closed, connected, cohomogeneity one manifold of dimension $n\geq 2$. Then the following statements are equivalent:
		\begin{enumerate}
			\item $M$ admits a Riemannian metric of positive scalar curvature. \label{IT:THMA_1}
			\item $M$ admits a $G$-invariant Riemannian metric of positive scalar curvature. \label{IT:THMA_2}
			\item $M$ is neither diffeomorphic to a torus nor to a product of a torus with one of $K$ or $A$. \label{IT:THMA_3}
			\item The universal cover of $M$ is not diffeomorphic to Euclidean space.\label{IT:THMA_4}
			\item $M$ admits no flat Riemannian metric. \label{IT:THMA_5}
		\end{enumerate}
	\end{theoremAlph}

	In the case of non-compact cohomogeneity one manifolds the situation is slightly different. Note that we still require the group $G$ to be compact and assume that $M$ has no boundary. Here, $M/G$ is homeomorphic to $\R$ or $[0,\infty)$ and we obtain the following result, where $\Mb\strut^\mathrm{o}$ denotes the open M\"obius band and the symbol ``$\approx$'' denotes diffeomorphism between manifolds. Recall that a manifold is \emph{open} if it is non-compact and has no boundary.
	
	\begin{theoremAlph}
		\label{theorem_cohomOne_noncpt}
		Let $M$ be an open, connected, cohomogeneity one manifold of dimension $n\geq 2$. Then the following statements are equivalent:
		\begin{enumerate}
			\item $M$ admits a complete Riemannian metric of positive scalar curvature. \label{IT:THMB_1}
			\item $M$ admits a complete $G$-invariant Riemannian metric of positive scalar curvature. \label{IT:THMB_2}
			\item $M$ is neither diffeomorphic to $T^{n-1}\times\R$ nor to $T^{n-2}\times\Mb\strut^\mathrm{o}$.\label{IT:THMB_3}
		\end{enumerate}
		Furthermore, if $M/G\approx\R$, then the following statements are equivalent to the previous ones:
		\begin{enumerate}
			\setcounter{enumi}{3}
			\item The universal cover of $M$ is not diffeomorphic to Euclidean space. \label{IT:THMB_4}
			\item $M$ has no complete flat Riemannian metric. \label{IT:THMB_5}
		\end{enumerate}
	\end{theoremAlph}

	 If $G$ is connected, then the two possibilities in statement \eqref{IT:THMA_3} of Theorems~\ref{theorem_cohomOne_cpt} and~\ref{theorem_cohomOne_noncpt} correspond to the two cases $M/G\approx S^1$ or $[-1,1]$, in the compact case, and $M/G\approx \R$ or $[0,\infty)$, in the non-compact case. Furthermore, the proofs of Theorems~\ref{theorem_cohomOne_cpt} and \ref{theorem_cohomOne_noncpt} show that $M$ admits a flat Riemannian metric if and only if $G$ is abelian, the isotropy subgroups of the principal orbits are trivial, and the isotropy subgroups of the non-principal orbits are isomorphic to $\Z_2$.

	In order to add statements \eqref{IT:THMB_4} and \eqref{IT:THMB_5} of Theorem \ref{theorem_cohomOne_noncpt} it is necessary to restrict to the case $M/G\approx \R$. For example, consider $\R^n$ with the standard action of $\mathrm{O}(n)$ as a cohomogeneity one manifold with orbit space $[0,\infty)$. The standard Euclidean metric is flat and invariant under this action. The torpedo metric however (see for example of \cite{walsh_cobordism} or \cite{ebertfrenck}) is an $\mathrm{O}(n)$-invariant metric of uniformly positive scalar curvature for $n\ge3$. If $M/G\approx \R$ and $M$ has a complete $G$-invariant metric of positive scalar curvature, then $M$ has in fact  a metric of uniformly positive scalar curvature, i.e.\ the scalar curvature is bounded from below by a positive constant. In the case that $M/G\approx [0,\infty)$ this does not hold in general (see Remark~\ref{R:NO_UNIF_PSC}).

	Theorems~\ref{theorem_cohomOne_cpt} and~\ref{theorem_cohomOne_noncpt} also give a classification of flat cohomogeneity one manifolds if the quotient $M/G$ is not $[0,\infty)$. We refer to \cite{MK02} for similar results on flat cohomogeneity one manifolds.
	
	
	By further increasing the symmetry we arrive at the notion of homogeneous spaces.  Recall that a smooth manifold is a \emph{homogeneous space} if it admits an effective, smooth, transitive action of a Lie group $G$, i.e.\ if there is an effective smooth Lie group action with only one orbit, (equivalently, with zero-dimensional orbit space).
	When $G$ is compact, the following theorem characterizes compact homogeneous spaces of positive scalar curvature.

	
	\begin{theoremAlph}
		\label{theorem_homSpaces}
		Let $M$ be a homogeneous space of dimension $n\geq 2$ and assume that $G$ is compact. Then the following statements are equivalent:
		\begin{enumerate}
			\item $M$ admits a Riemannian metric of positive scalar curvature. \label{IT:THMC_1}
			\item $M$ admits a $G$-invariant Riemannian metric of positive scalar curvature. \label{IT:THMC_2}
			\item The connected components of $M$ are not diffeomorphic to a torus. \label{IT:THMC_3}
			\item The universal cover of each connected component of $M$ is not diffeomorphic to Euclidean space. \label{IT:THMC_4}
			\item $M$ admits no flat Riemannian metric. \label{IT:THMC_5}
		\end{enumerate}
	\end{theoremAlph}

	Note that the statements in Theorem~\ref{theorem_homSpaces} are exactly the same as in Theorem \ref{theorem_cohomOne_cpt}, except for item \eqref{IT:THMC_3}, where the homogeneous and cohomogeneity one situations differ.
	If the group $G$ is connected, then the proof of Theorem \ref{theorem_homSpaces} shows that $M$ admits a flat Riemannian metric if and only if $G$ is abelian and the isotropy subgroup is trivial.
	
	The equivalence between items \eqref{IT:THMC_2} and \eqref{IT:THMC_4} in Theorem~\ref{theorem_homSpaces} has already been shown by B\'erard-Bergery in \cite{Be78} and does not require the assumption that $G$ is compact. We also refer to \cite{LL81} for a different proof of this equivalence in the case of connected Lie groups. The equivalence of items \eqref{IT:THMC_3} and \eqref{IT:THMC_5}  is a special case of a theorem of Wolf \cite[Theorem I.2.7.1]{Wo74}, who classified flat homogeneous spaces. We will prove Theorem~\ref{theorem_homSpaces} without resorting to these results.
	
	
	Recall that the Bonnet--Myers theorem implies that the fundamental group of a closed Riemannian manifold with positive Ricci curvature must be finite. This condition on the fundamental group is necessary and sufficient for the existence of an invariant Riemannian metric of positive Ricci curvature both on homogeneous spaces for compact Lie groups and closed cohomogeneity one manifolds. Indeed, a homogeneous space for a compact Lie group has an invariant Riemannian metric of positive Ricci curvature if and only if its fundamental group is finite (see \cite[Proposition 3.4]{Na79}). Grove and Ziller showed in \cite{GZ02} that the same equivalence holds for closed cohomogeneity one manifolds.
		
	Homogeneous spaces with an invariant metric of positive sectional curvature, where the Lie group $G$ must necessarily be compact, have been classified (see, for example, \cite{WZ18} and references therein). In the cohomogeneity one case, the possible manifolds that may carry invariant Riemannian metrics with positive sectional curvature have been classified in the simply-connected case (see \cite{GWZ08}). These classifications, however, differ fundamentally from each other and there is no direct analogy as observed in the case of positive scalar or Ricci curvature.
	
	Our note is organized as follows. In Section~\ref{S:PRELIM} we prove a result on invariant metrics with non-negative sectional curvature which simplifies the proofs of the main theorems. In Section~\ref{S:HOMOGENEOUS} we prove Theorem~\ref{theorem_homSpaces}. We then prove Theorems~\ref{theorem_cohomOne_cpt} and~\ref{theorem_cohomOne_noncpt} in Section~\ref{S:COHOM_ONE}.
	
	\begin{ack}
	The authors would like to thank Christoph B\"ohm, Jason DeVito, and  Martin Kerin for helpful conversations. Philipp Reiser would like to thank the Department of Mathematical Sciences of Durham University for its hospitality while this work was carried out. 
	\end{ack}
	
	
	\section{Preliminary Observations} 
	\label{S:PRELIM}

	  Let $M$ be a closed, smooth manifold and let $G$ be a compact Lie group acting smoothly and effectively on $M$. We refer the reader to \cite{Pe2016,Sa1996} for general background on Riemannian geometry. For basic results on compact Lie groups and actions on Riemannian manifolds, including homogeneous spaces and cohomogeneity one manifolds, we refer the reader to \cite{AB2015}. We will use the following deformation result to obtain metrics with positive scalar curvature.

	\begin{lemma}
		\label{prop_deform}
		Suppose that $M$ admits a $G$-invariant metric $g$ with non-negative scalar curvature. If $g$ is not Ricci-flat, then $M$ admits a $G$-invariant metric of positive scalar curvature.
	\end{lemma}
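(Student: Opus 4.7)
The plan is to deform $g$ by the Ricci flow and invoke the parabolic maximum principle. Since $M$ is closed, Hamilton's short-time existence theorem yields a unique smooth solution $(g_t)_{t\in[0,\varepsilon)}$ of $\partial_t g_t = -2\Ric(g_t)$ with $g_0 = g$. The flow is natural under diffeomorphisms, so for any $\phi \in G$ the pullback $\phi^* g_t$ is also a solution with initial condition $\phi^* g = g$; by uniqueness $\phi^* g_t = g_t$, so every $g_t$ is $G$-invariant.

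Along Ricci flow, the scalar curvature evolves by
\[
\partial_t \scal_t \;=\; \Delta_{g_t}\scal_t + 2\,|\Ric(g_t)|^2_{g_t}.
\]
Since $\scal_0 \geq 0$, the weak maximum principle yields $\scal_t \geq 0$ on $M$ throughout the interval of existence. Because $g_0$ is not Ricci-flat, there exists a point $p\in M$ with $|\Ric(g_0)|^2(p)>0$; hence $\partial_t\scal_t|_{t=0}(p)>0$, and so $\scal_t(p)>0$ for all sufficiently small $t>0$. The strong parabolic maximum principle then propagates pointwise positivity to all of $M$: for every $t_*>0$ slightly larger, $\scal_{t_*}>0$ everywhere, and $g_{t_*}$ is the desired $G$-invariant metric of positive scalar curvature.

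The main points requiring care are the equivariance of the Ricci flow (which hinges on the uniqueness of short-time solutions on closed manifolds) and the invocation of the strong maximum principle to upgrade ``$\scal>0$ somewhere'' into ``$\scal>0$ everywhere''; both are standard but need to be stated cleanly. A more elementary alternative would combine the first-variation formula $\tfrac{d}{ds}\scal(g+sh)|_{s=0} = -\Delta(\operatorname{tr} h) + \delta\delta h - \langle h,\Ric\rangle$ with the choice $h=-\Ric$, which at a scalar-flat metric reduces pointwise to $|\Ric|^2$, followed by a $G$-invariant Kazdan--Warner type conformal deformation to spread positivity; this avoids Ricci flow but requires separate analysis of the conformal Laplacian's spectrum that the Ricci flow argument circumvents.
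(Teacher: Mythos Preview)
Your argument is correct and matches the paper's own proof, which likewise runs the Ricci flow, invokes uniqueness on the closed manifold for $G$-invariance, and then appeals to the evolution equation for $\scal$ together with the maximum principle (the paper packages this last step as a citation to \cite[Proposition~2.18]{Br10}, and mentions Ehrlich's deformation \cite{Eh76} as precisely the non--Ricci-flow alternative you sketch at the end). One small wrinkle: the assertion $\partial_t\scal_t|_{t=0}(p)>0$ overlooks the Laplacian term, but your intended conclusion $\scal_t(p)>0$ for small $t$ is still valid---either $\scal_0(p)>0$ and continuity suffices, or $\scal_0(p)=0$ is a minimum of $\scal_0\ge 0$ so that $\Delta_{g_0}\scal_0(p)\ge 0$.
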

		\begin{proof}
		One uses the Ricci flow to deform $g$. As $M$ is compact we have existence and uniqueness of the Ricci flow and it is well known that the deformed metrics are still $G$-invariant since the Ricci flow preserves isometries. The statement now  follows directly from \cite[Proposition~2.18]{Br10}. Alternatively, the lemma also follows from the deformation techniques of Ehrlich \cite{Eh76}, which do not use the Ricci flow.
	\end{proof}
	
	  Let us first consider metrics with non-negative sectional curvature. The following proposition follows from well known results in Riemannian geometry. 
	
	
	\begin{proposition}
		\label{P:sec>=0}
		Let $M$ be a closed, connected smooth manifold. Suppose that a compact Lie group $G$ acts smoothly on $M$ and that $M$ has a $G$-invariant Riemannian metric of non-negative sectional curvature. Then the following statements are equivalent:
		\begin{enumerate}
			\item $M$ admits a metric of positive scalar curvature. \label{IT:P2.2_METRIC_PSC}
			\item $M$ admits a $G$-invariant metric of positive scalar curvature.\label{IT:P2.2_G_METRIC_PSC}
			\item $M$ admits no flat metric. \label{IT:P2.2_NO_FLAT_METRIC}
			\item The universal cover of $M$ is not diffeomorphic to Euclidean space. \label{IT:P2.2_UNIV_COVER}
			\item $M$ is not finitely covered by a torus. \label{IT:P2.2_NOT_COVERED_BY_TORUS}
		\end{enumerate}
	\end{proposition}


	\begin{proof}
		Suppose that $M$ is finitely covered by a torus. Then the universal cover of $M$ is diffeomorphic to $\R^n$ and $M$ is flat by Cheeger and Gromoll's Splitting Theorem \cite{CG71}. Hence, by the work of Gromov and Lawson on enlargeable manifolds \cite[Theorem~A and Corollary~A]{GL83}, the manifold $M$ does not admit a metric of positive scalar curvature. This trivially implies that $M$ does not admit a $G$-invariant metric of positive scalar curvature.
		
		To conclude the proof, we prove that item \eqref{IT:P2.2_NOT_COVERED_BY_TORUS} implies item \eqref{IT:P2.2_G_METRIC_PSC}. Assume that $M$ has no $G$-invariant metric of positive scalar curvature. Lemma~\ref{prop_deform} implies that $M$ is Ricci-flat and hence flat, as the sectional curvature is non-negative. It then follows from the Bieberbach theorems that $M$ is finitely covered by a torus (see e.g.\ \cite[Theorem II.5.3]{Ch86}).
	\end{proof}
	
	  Parts of Proposition~\ref{P:sec>=0} still hold if we weaken the assumptions on the curvature.
	
	\begin{proposition}
		\label{Ric>=0}
		Let $M$ be a closed, connected smooth manifold. Suppose that a compact Lie group $G$ acts smoothly on $M$ and that $M$ has a $G$-invariant Riemannian metric of non-negative Ricci curvature. Then statements \eqref{IT:P2.2_NO_FLAT_METRIC}, \eqref{IT:P2.2_UNIV_COVER}  and \eqref{IT:P2.2_NOT_COVERED_BY_TORUS} of Proposition \ref{P:sec>=0} are equivalent and each one of statements \eqref{IT:P2.2_METRIC_PSC} and \eqref{IT:P2.2_G_METRIC_PSC} implies statements \eqref{IT:P2.2_NO_FLAT_METRIC}, \eqref{IT:P2.2_UNIV_COVER}  and \eqref{IT:P2.2_NOT_COVERED_BY_TORUS}.
	\end{proposition}
	
	 The proof goes along the same lines as that of Proposition~\ref{P:sec>=0}, except that we cannot conclude that $M$ is flat if it has no ($G$-invariant) metric of positive scalar curvature. Indeed, the converse does not hold in general. Consider the K3 surface: it is Ricci-flat and admits no flat metric because it is compact and simply connected. However, it does not admit a Riemannian metric of positive scalar curvature because it is spin with non-vanishing $\hat{A}$-genus.

	
	\begin{remark}
	We will apply Proposition \ref{P:sec>=0} to homogeneous spaces in order to prove Theorem \ref{theorem_homSpaces}. More generally, we can also consider biquotients $G\bq H$.
	 These are quotients of a compact Lie group $G$ by the action of a closed subgroup $H$ of $G\times G$ that acts on $G$ via $(h_1,h_2)\cdot g=h_1gh_2^{-1}$. Biquotients always admit metrics of non-negative sectional curvature that are invariant under the canonical action of $\textup{Norm}_{G\times G}(H)/H$ (see e.g.\ \cite[Section 2]{ST04}) and admit invariant metrics of positive Ricci curvature if and only if their fundamental group is finite (see \cite[Theorem A]{ST04}). Hence Proposition~\ref{P:sec>=0} directly applies to this class of spaces. To obtain an analog of Theorem~\ref{theorem_homSpaces} for biquotients one would need to show that every flat biquotient is diffeomorphic to a torus; however, the topological classification of flat biquotients is, to the best of our knowledge, still open in full generality.
 	\end{remark}

	
	\section{Proof of Theorem \ref{theorem_homSpaces}}
	\label{S:HOMOGENEOUS}
		
	Let $G$ be a compact Lie group and let $M$ be a homogeneous space for $G$. Then $M$ is diffeomorphic to $G/H$, where $H\subseteq G$ is the isotropy group of some given point $p\in M$. We fix an $\textup{Ad}_G$-invariant inner product $Q$ on the Lie algebra $\mathfrak{g}$ of $G$, which induces a bi-invariant Riemannian metric on $G$. The $\textup{Ad}_G$-invariance of $Q$ implies that
	\begin{equation}
		\label{EQ:ad-inv}
		Q([X,Y],Z)=Q(X,[Y,Z])
	\end{equation}	
	for all $X,Y,Z\in\mathfrak{g}$.
	Let $\mathfrak{p}=\mathfrak{h}^\perp$ be the orthogonal complement of the Lie algebra $\mathfrak{h}$ of $H$. We can identify $\mathfrak{p}$ with $T_p M$ and the isotropy action of $H$ on $T_p M$ via the differential corresponds to the action on $\mathfrak{p}$ via $\textup{Ad}_H$. Thus, we can restrict $Q$ to $\mathfrak{p}$ , which induces a $G$-invariant Riemannian metric $g$ on $M$ such that the projection $G\to G/H$ is a Riemannian submersion. Hence, for orthonormal vectors $X,Y\in\mathfrak{p}$, we have
	\begin{equation}
	\label{eq_curv_homsp}
	\sec_M(X,Y)\geq\sec_G(X,Y)=\frac{1}{4}|[X,Y]|^2. 
	\end{equation}
	In particular $(M,g)$ has non-negative sectional curvature.

		By Proposition~\ref{P:sec>=0} we only have to show that statement \eqref{IT:THMC_3} of Theorem~\ref{theorem_homSpaces} implies one of the other statements as all connected components of $M$ are diffeomorphic. Now suppose that $M$ admits no metric of positive scalar curvature. Then the metric $g$ is flat, as it is constant and of non-negative sectional curvature, so $[\mathfrak{p},\mathfrak{p}]=0$, by inequality~\eqref{eq_curv_homsp}. Hence, by \eqref{EQ:ad-inv},
		\[
		Q([\mathfrak{p},\mathfrak{h}],\mathfrak{p})=Q(\mathfrak{h},[\mathfrak{p},\mathfrak{p}])=0. 
		\]
		Furthermore, again by \eqref{EQ:ad-inv}, we have 
		\[
		Q([\mathfrak{p},\mathfrak{h}],\mathfrak{h})=Q(\mathfrak{p},[\mathfrak{h},\mathfrak{h}])=0,
		 \]
		as $[\mathfrak{h},\mathfrak{h}]\subseteq\mathfrak{h}$, so $[\mathfrak{p},\mathfrak{h}]=0$. This shows that $\mathfrak{p}\subseteq Z(\mathfrak{g})$.

		As $G$ is compact we can decompose
		\[\mathfrak{g}=[\mathfrak{g},\mathfrak{g}]\oplus Z(\mathfrak{g}). \]
		This decomposition is orthogonal with respect to any $\textup{Ad}_G$-invariant inner product, so $[\mathfrak{g},\mathfrak{g}]=Z(\mathfrak{g})^\perp\subseteq\mathfrak{h}$. Hence $H$ contains the unique connected closed Lie subgroup $S$ with Lie algebra $[\mathfrak{g},\mathfrak{g}]$. Let $M_o$ be a connected component of $M$. Then $M_o$ is a homogeneous space and is diffeomorphic to $G_o/(G_o\cap H)$, where  $G_o$ denotes the identity component of $G$. The subgroup $S$ is normal and closed in $G_o$, hence we can replace $G_o$ and $G_o\cap H$ by their quotient by $S$. Thus, $G_o$ is abelian and $G_o\cap H$ is a normal subgroup. As a consequence, the quotient $G_o/(G_o\cap H)$ is a compact abelian Lie group, i.e.\ a torus. Hence statement \eqref{IT:THMC_3} implies statement \eqref{IT:THMC_1}. This concludes the proof of Theorem~\ref{theorem_homSpaces}.	\hfill $\square$
	
	
	\begin{remark}
		One could replace the last part of the proof of Theorem~\ref{theorem_homSpaces} by the following shorter, but less elementary argumentation: Suppose $M$ has no metric of positive scalar curvature. Then by \cite{LY74} the identity component $G_o$ is abelian, hence the connected components of $M$, which are diffeomorphic to $G_o/(G_o\cap H)$, are diffeomorphic to a torus.
	\end{remark}

	
	\section{Proofs of Theorems \ref{theorem_cohomOne_cpt} and \ref{theorem_cohomOne_noncpt}}
	\label{S:COHOM_ONE}
	
	  Let $M$ be a connected cohomogeneity one manifold. By the structure results  for cohomogeneity one manifolds (see, for example, \cite[Theorem~A and Corollary C]{GGZ18} or \cite[Section~3]{Grove2002}), we have one of the following cases:
		
	\begin{itemize}
		\item[(C1)] $M/G\approx S^1$ and $M\to M/G$ is a fiber bundle where the fiber is a homogeneous space $G/H$ with $H\subseteq G$ the principal isotropy of the action.
		
		\item[(C2)] $M/G\approx [-1,1]$ and $M$ can be written as the union of two tubular  neighborhoods $D(G/K_\pm)$ of the non-principal orbits $G/K_\pm$ with isotropy group $K_\pm$. These non-principal orbits project down to the endpoints $\pm1\subset[-1,1]$. By the slice theorem, each one of $D(G/K_\pm)$ is equivariantly diffeomorphic to a disk bundle $ G\times_{K_{\pm}}D_{\pm}$, where $D_\pm$ is a disk normal to the orbit $G/K_{\pm}$. The principal orbits are homogeneous spaces $G/H$ and we have $H\subseteq K_\pm\subseteq G$. The quotients $K_\pm/H$ are diffeomorphic to spheres.
		
		\item[(N1)] $M/G\approx\R$ and $M$ is the product of $\R$ and a homogeneous space $G/H$.
		
		\item[(N2)] $M/G\approx [0,\infty)$ and, by the slice theorem, $M$ is equivariantly diffeomorphic to a disk bundle $G\times_K D$, where $D$ is a disc normal to the non-principal orbit $G/K$ over $0\in [0,\infty)$. The principal orbits, which correspond to points in $(0,\infty)$, are homogeneous spaces $G/H$ and we have $H\subseteq K\subseteq G$. The quotient $K/H$ is diffeomorphic to a sphere.
		
	\end{itemize}
	
	  Grove and Ziller \cite{GZ00} showed that $M$ admits a $G$-invariant metric of non-negative sectional curvature in some cases and conjectured that this holds in general. This is not the case, however, as shown in \cite{GVWZ06}. Hence, we cannot derive Theorem \ref{theorem_cohomOne_cpt} from Proposition~\ref{P:sec>=0}. Instead we will use the fact that $M$ always admits a $G$-invariant Riemannian metric of non-negative Ricci curvature. Such metrics were constructed by Grove and Ziller in \cite{GZ02}. 
	We will now go through each one of the cases (C1)--(N2) above. We begin with the following observation. 
	
	\begin{lemma}
		\label{prop_case(i)(iii)}
		In cases (C1) and (N1) the manifold $M$ admits a $G$-invariant metric of positive scalar curvature if and only if the fiber $G/H$ does.
	\end{lemma}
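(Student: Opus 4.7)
The proof has two directions. In the ``if'' direction, we build a $G$-invariant PSC metric on $M$ from one on the fiber $G/H$. In the ``only if'' direction, we argue contrapositively, invoking Theorem~\ref{theorem_homSpaces} together with an enlargeability obstruction.

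For the ``if'' direction, let $g_0$ be a $G$-invariant PSC metric on $G/H$. In case (N1), $M$ is $G$-equivariantly diffeomorphic to $\R\times G/H$ with $G$ acting trivially on the first factor, so the complete product metric $dt^2+g_0$ is $G$-invariant with constant scalar curvature $\scal(g_0)>0$. In case (C1), the $G$-equivariant bundle $M\to S^1$ has monodromy a $G$-equivariant diffeomorphism $\phi$ of $G/H$, corresponding to an element of $N_G(H)/H$. Let $T$ be the closure of $\langle\phi\rangle$ in the compact group $N_G(H)/H$; it is a compact abelian group acting on $G/H$ by $G$-equivariant diffeomorphisms. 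Since $G/H$ admits a $G$-invariant PSC metric, Theorem~\ref{theorem_homSpaces} applied to the $G$-action implies that no connected component of $G/H$ is a torus. Applying Theorem~\ref{theorem_homSpaces} once more, this time to the (effective quotient of the) transitive $(G\times T)$-action on $G/H$, yields a $(G\times T)$-invariant PSC metric $g_0'$ on $G/H$. The product metric $dt^2+g_0'$ on the universal cover $\R\times G/H$ is then invariant under the deck transformation $(t,x)\mapsto(t+1,\phi(x))$ and descends to the desired $G$-invariant PSC metric on $M$.

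For the ``only if'' direction, suppose $G/H$ admits no $G$-invariant PSC metric. By Theorem~\ref{theorem_homSpaces}, some connected component $F_0\subseteq G/H$ is diffeomorphic to a torus $T^{n-1}$, and the proof of that theorem shows that the identity component $G_0$ of $G$ acts on $F_0$ essentially as a torus by translations, with finite principal isotropy. Let $M_0\subseteq M$ be the connected component containing $F_0$. In case (C1), the map $M_0\to S^1$ is then, up to a finite cover, a principal $T^{n-1}$-bundle over $S^1$; since $H^2(S^1;\Z^{n-1})=0$, every such bundle is trivial, so $M_0$ is finitely covered by $T^n$, which admits no metric of positive scalar curvature by the enlargeability theorem of Gromov--Lawson~\cite{GL83}. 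In case (N1), $M_0$ is finitely covered by $\R\times T^{n-1}$; by the upgrade to uniformly positive scalar curvature noted in the paragraph following Theorem~\ref{theorem_cohomOne_noncpt}, a complete $G$-invariant PSC metric on $M$ would restrict to a metric of uniformly positive scalar curvature on $\R\times T^{n-1}$, contradicting the enlargeability of this flat manifold (witnessed by its universal cover $\R^n$).

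The main conceptual step is the symmetry-enlargement trick in case (C1) of the ``if'' direction: by passing from $G$ to $G\times T$ and re-applying Theorem~\ref{theorem_homSpaces}, one can arrange the PSC metric on the fiber to be simultaneously $G$-invariant and $\phi$-invariant, so that the associated product metric descends correctly through the mapping-torus quotient. The main technical hurdle is the enlargeability obstruction in case (N1) of the ``only if'' direction, which hinges on the passage from general PSC to uniformly PSC that is particular to the setting $M/G\approx\R$.
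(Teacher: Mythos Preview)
Your proof is essentially correct but takes a different route from the paper, and it contains one circular step. The paper's argument is shorter and more uniform: it observes that any $\textup{Ad}_G$-invariant inner product $Q$ on $\mathfrak{g}$ induces a normal homogeneous metric on $G/H$ that is \emph{automatically} invariant under the monodromy $R_{a^{-1}}$ (since $\textup{Ad}_a$ preserves $Q$), so the local product $dt^2+Q|_{\mathfrak{p}}$ extends to a $G$-invariant metric of $\sec\geq 0$ on all of $M$. This metric is flat if and only if its restriction to the fiber is flat, and by \cite[Corollaries A and B2]{GL83} these are exactly the cases in which $M$ and $G/H$ admit no positive scalar curvature metric. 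Your symmetry-enlargement trick---passing to $G\times T$ with $T=\overline{\langle\phi\rangle}$ and re-applying Theorem~\ref{theorem_homSpaces}---is a nice alternative for the ``if'' direction in case (C1), but the paper avoids it entirely because the specific $Q$-metric already has the needed $\phi$-invariance.

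The genuine gap is in your ``only if'' argument for case (N1). You invoke the upgrade to \emph{uniformly} positive scalar curvature stated after Theorem~\ref{theorem_cohomOne_noncpt}, but that statement is itself a consequence of Lemma~\ref{prop_case(i)(iii)} (one first uses the lemma to get a $G$-invariant PSC metric on $G/H$, then takes the product with $dt^2$), so appealing to it here is circular. The fix is immediate: simply cite \cite[Corollary~B2]{GL83} directly, which rules out \emph{any} complete metric of positive scalar curvature on $T^{n-1}\times\R$, not just uniformly positive ones; no passage through uniform positivity is needed. With this correction your argument goes through.
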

	\begin{proof}
		We fix an $\textup{Ad}_G$-invariant inner product $Q$ on $\mathfrak{g}$ and set $\mathfrak{p}=\mathfrak{h}^\perp$ as in the proof of Theorem~\ref{theorem_homSpaces}. In case (C1) the bundle $M\to S^1$ can be considered as the mapping torus of a $G$-equivariant diffeomorphism $G/H\to G/H$ induced by right multiplication $R_{a^{-1}}$ on $G$ by an element $a\in N(H)$ in the normalizer of $H$ (see, for example, \cite[Corollary I.4.3]{Br72}). The induced map on the Lie algebra is $\textup{Ad}_a$ which fixes $\mathfrak{p}$ as $Q$ is $\textup{Ad}_G$-invariant. Hence $R_{a^{-1}}$ induces an isometry on $G/H$ with respect to the metric induced by $Q$. As a consequence, any $\textup{Ad}_G$-invariant inner product on $\mathfrak{g}$ extends to all of $M$ in both cases by taking the product with the flat metric on $S^1$ or $\R$. In particular $M$ has a metric of non-negative sectional curvature and this metric is flat if and only if its restriction to $G/H$ is flat. This  corresponds precisely to the cases where $M$ and $G/H$ have no metric of positive scalar curvature by \cite[Corollary A]{GL83} and \cite[Corollary B2]{GL83}.
	\end{proof}
		

		\renewcommand{\proofnamefont}{\bfseries}
		\begin{proof}[Proof of Theorem~\ref{theorem_cohomOne_cpt} in case (C1)]	
		Suppose that $M$ has no $G$-invariant metric of positive scalar curvature. Then the fiber $G/H$ has no metric of positive scalar curvature by Lemma~\ref{prop_case(i)(iii)}. Hence the connected components of $G/H$ are diffeomorphic to a torus by Theorem~\ref{theorem_homSpaces}. By restricting the action to the identity component $G_o$ we obtain that $M$ is a principal $T^{n-1}$-bundle over $S^1$. Such bundles are necessarily given by the product $T^{n-1}\times S^1=T^n$ as $S^1$ has no higher homotopy groups and $T^{n-1}$ is connected. Hence we have shown that each one of statements \eqref{IT:THMA_3}  and \eqref{IT:THMA_4}  imply statement \eqref{IT:THMA_2}. To finish the proof, we proceed as follows. Clearly, statement \eqref{IT:THMA_2} implies statement \eqref{IT:THMA_1}. Now, by Proposition~\ref{Ric>=0}, each one of statements \eqref{IT:THMA_1} and \eqref{IT:THMA_2} implies statements \eqref{IT:THMA_4} and \eqref{IT:THMA_5}.
		Again, by Proposition~\ref{Ric>=0}, statements \eqref{IT:THMA_4} and \eqref{IT:THMA_5} are equivalent, and each one of them implies that $M$ is not finitely covered by a torus, which clearly implies statement \eqref{IT:THMA_3}. This shows the equivalence of statements \eqref{IT:THMA_1}--\eqref{IT:THMA_5}.
		\end{proof}

		\begin{proof}[Proof of Theorem~\ref{theorem_cohomOne_noncpt} in case (N1)]
		Suppose that $M$ has no $G$-invariant Riemannian metric of positive scalar curvature. Then, as in case (C1), the connected components of the fiber $G/H$ are diffeomorphic to a torus and $M$ is diffeomorphic to $T^{n-1}\times \R$. Manifolds of this form admit a complete flat Riemannian metric, but have no complete Riemannian metric of positive scalar curvature by \cite[Corollary B2]{GL83}. This shows that statements \eqref{IT:THMB_1}, \eqref{IT:THMB_2} and \eqref{IT:THMB_3} are equivalent and that they are implied by statement \eqref{IT:THMB_5}.
		
		If $M$ admits a $G$-invariant metric of positive scalar curvature, then so does $G/H$ by Lemma~ \ref{prop_case(i)(iii)} and, by Theorem~\ref{theorem_homSpaces}, the universal cover of $M$ is not diffeomorphic to $\R^n$. Hence statement \eqref{IT:THMB_2} implies statement \eqref{IT:THMB_4}. Also statement \eqref{IT:THMB_4} implies statement \eqref{IT:THMB_5}, as all manifolds with a complete Riemannian metric of non-positive sectional curvature are covered by Euclidean space.
		\end{proof}

	\begin{proof}[Proof of Theorem \ref{theorem_cohomOne_noncpt} in case (N2)]
			Assume that $M/G$ is diffeomorphic to $[0,\infty)$, i.e.\ $M$ can be written as a disc bundle $G\times_K D$ over the non-principal orbit $G/K$. Recall that the principal orbits of the action are diffeomorphic to $G/H$ and the non-principal orbit of the action, which projects down to $0\in[0,\infty)$, has isotropy $K$ with $H\subseteq K\subseteq G$. We equip $M$ with the $G$-invariant Riemannian metric $g$ of non-negative Ricci curvature constructed in \cite{GZ02}. This metric is given by
		\begin{equation}
		\label{eq_MetricRic>=0}
		g=dt^2+f_0^2 Q|_{\mathfrak{p}_0}+f_1^2 Q|_{\mathfrak{p}_1}+f_2^2 Q|_{\mathfrak{p}_2}+Q|_{\mathfrak{m}}.		\end{equation}
		Here, $t$ parametrizes a horizontal lift of the orbit space $[0,\infty)$, $Q$ is an $\textup{Ad}_G$-invariant inner product on $\mathfrak{g}$, and $\mathfrak{g}$ is the $Q$-orthogonal sum $\mathfrak{h}\oplus\mathfrak{p}\oplus\mathfrak{m}$ such that $\mathfrak{h}\oplus\mathfrak{p}=\mathfrak{k}$, where $\mathfrak{h}$ and $\mathfrak{k}$ are the Lie algebras of $H$ and $K$, respectively. The vector spaces $\mathfrak{p}_i$ are orthogonal subspaces of $\mathfrak{g}$ that span $\mathfrak{p}$. The $f_i$ are smooth, odd, non-negative real-valued functions depending on $0\leq t<\infty$ with positive derivative at $t=0$.
		
		The metric $g$ has non-negative Ricci curvature, so it has positive scalar curvature if it has positive Ricci curvature in at least one direction at every point. The Ricci curvatures of $g$ were computed in \cite[Proposition 2.10]{GZ02} and, by \cite[Proposition 3.2]{GZ02}, they are non-negative if $f_i^2\leq 2$ and the following functions are non-negative:
			\begin{align*}
			\ric_t&=-\sum_{i=0}^{2}d_i\frac{F_i^{\prime\prime}}{F_i},\\
			\ric_0&=\left(\frac{d_1}{F_1^4}+\frac{d_2}{F_2^4} \right)F_0^2-\left(d_1\frac{F_1^\prime}{F_1}+d_2\frac{F_2^\prime}{F_2}\right)\frac{F_0^\prime}{F_0}-\frac{F_0^{\prime\prime}}{F_0},\\[10pt]
			\ric_1&=\frac{d_0\frac{F_0^2}{F_1^2}+(d_1-1)\left(4-3\frac{F_0^2}{F_1^2}-{F_1^\prime}^2 \right)}{F_1^2}+d_2\frac{F_1^2}{F_2^4}-\left(d_0\frac{F_0^\prime}{F_0}+d_2\frac{F_2^\prime}{F_2}\right)\frac{F_1^\prime}{F_1}-\frac{F_1^{\prime\prime}}{F_1},\\[10pt]
			\ric_2&=\frac{d_0(3-2\frac{F_0^2}{F_2^2})+d_1(3-2\frac{F_1^2}{F_2^2})+(d_2-1)(1-{F_2^\prime}^2)}{F_2^2}-\left(d_0\frac{F_0^\prime}{F_0}+d_1\frac{F_1^\prime}{F_1}\right)\frac{F_2^\prime}{F_2}-\frac{F_2^{\prime\prime}}{F_2}.
			\end{align*}
			Here, $d_i=\dim(\mathfrak{p}_i)$, $F_0=abcf_0$, $F_1=bcf_1$, and $F_2=cf_2$, with $a,b,c$ given constants satisfying $a,b,c\geq0$ and $a,b<1$, and which vanish only if the corresponding subspace $\mathfrak{p}_i$ is trivial.
			
			In \cite{GZ02} the functions $F_i$ are chosen in the following way (see \cite[Lemma 3.3]{GZ02}). Set $F_2(t)=c\sin(c^{-1}t)$ on $[0,\frac{\pi}{2}c]$ and let $0<t_0<t_1<\frac{\pi}{2}c$ such that $F_2(t_0)=abc$ and $F_2(t_1)=bc$. Now set $F_1=F_2$ on $[0,t_1]$, $F_1=bc$ on $[t_1,\frac{\pi}{2}c]$, $F_0=F_2$ on $[0, t_0]$, and $F_0=c$ on $[t_0,\frac{\pi}{2}c]$. Then the functions are extended constantly on $[\frac{\pi}{2}c,\infty)$, i.e.\ one lets $F_i(t)=F_i(\frac{\pi}{2}c)$ for $t\in[\frac{\pi}{2}c,\infty)$. Then $\ric_t\geq0$. One can now verify that the functions $\ric_i$ are uniformly positive and that this still holds after smoothing these functions if the second derivatives are made sufficiently large around the non-differentiable points. By \cite[Proposition 3.2]{GZ02}, the metric $g$ obtained in this way has non-negative Ricci curvature. 
			
			For $t>\frac{\pi}{2}c$, the metric $g$ is a product metric and is flat if the principal orbit $G/H$ corresponding to $t$ is flat. Thus, to ensure that the scalar curvature is positive, we will modify the functions $F_i$ so that, first, the Ricci curvature of $g$ is non-negative and,  additionally, the second derivative of the $F_i$ is strictly negative for $t>0$ and their third derivative is negative at $t=0$. To achieve this we proceed in a similar way as in the preceding paragraph. Fix a small $\varepsilon\in (0,t_0)$, set $F_2(t)=c\sin(c^{-1}t)$ on $[0,\frac{\pi}{2}c-\varepsilon]$ and extend $F_2$ on $[\frac{\pi}{2}c-\varepsilon,\infty)$ by the function $t\mapsto c-\frac{1}{t+\lambda}$, where $\lambda\in\mathbb{R}$ is chosen so that $F_2$ is continuous. Then set $F_1=F_2$ on $[0,t_1-\varepsilon]$ and $F_0=F_2$ on $[0, t_0-\varepsilon]$ and extend these functions as above so that they converge to $bc$ and $abc$, respectively, as $t\rightarrow\infty$. In a similar fashion as in the case of non-negative Ricci curvature \cite[Lemma~3.3]{GZ02}, one can now verify that the functions $\ric_i$ are uniformly positive for $\varepsilon$ sufficiently small. By smoothing the functions $F_i$, again as in \cite[Lemma 3.3]{GZ02}, one obtains smooth functions $f_i$ with strictly negative second derivative and such that the Ricci curvature of the metric $g$ is non-negative.

		By \cite[Proposition 2.10]{GZ02}, the Ricci curvatures of $g$ for $T=\frac{\partial}{\partial t}$ and $A\in \mathfrak{m}$  are given by
	
		\begin{align}
			\label{EQ:RicT}
			\Ric(T)&=\ric_t,\\
			\label{EQ:RicA}
			\Ric(A)&=\sum_{k}\left(\lVert[A,e_k]_{\mathfrak{h}}\rVert^2+\frac{1}{4}\lVert[A,e_k]_{\mathfrak{m}}\rVert^2+\sum_{i=0}^{2}\left(1-\frac{1}{2}f_i^2\right)\lVert[A,e_k]_{\mathfrak{p}_i}\rVert^2 \right).
		\end{align}
		 Here $(e_k)$ denotes an orthonormal basis of $\mathfrak{m}$. Hence, $g$ has positive scalar curvature if $\mathfrak{p}$ is non-trivial or if there are two vectors $A,B\in\mathfrak{m}$ such that $[A,B]\neq0$.
		 
		Now suppose that $M$ has no $G$-invariant metric of positive scalar curvature. Then $\mathfrak{p}$ is trivial and $[\mathfrak{m},\mathfrak{m}]=0$. By an argument similar to the argument in the proof of Theorem \ref{theorem_homSpaces}, it follows that $\mathfrak{m}\subseteq Z(\mathfrak{g})$. Hence, we have
		\begin{equation}
			\label{eq_commutatorIncl}
			[\mathfrak{g},\mathfrak{g}]=Z(\mathfrak{g})^{\perp}\subseteq\mathfrak{m}^\perp=\mathfrak{k}=\mathfrak{h}.
		\end{equation}
		  We consider the action of the identity component $G_o$ on $M$. This action has again cohomogeneity one, but the orbit spaces $M/G$ and $M/G_o$ are not necessarily identical. More precisely, we have $M/G_o\approx\R$ or $[0,\infty)$. In the first case we can argue exactly as in case (N1).
		
		Suppose now that $M/G_o \approx M/G\approx[0,\infty)$, so we can replace $G$, $K$ and $H$ by $G_o$, $G_o\cap K$ and $G_o\cap H$. By \eqref{eq_commutatorIncl} the unique connected Lie subgroup $S$ with Lie algebra $[\mathfrak{g},\mathfrak{g}]$ is contained in $H$. Hence, by taking the quotient by $S$, we can assume that $G$ is abelian. Hence, the subgroup $H$, which fixes every point in $M$, is normal in $G$. Thus, by taking the quotient by $H$, we can assume that $H$ is trivial. The Lie algebras $\mathfrak{h}$ and $\mathfrak{k}$ are identical, so $K/H$ is zero-dimensional. As $K/H$ is diffeomorphic to a sphere, it follows that $K$ is isomorphic to $\Z_2$. The group $G$ is abelian, hence it is a torus $T^{n-1}=S^1\times \dots\times S^1\subseteq\C\times\dots\times\C$, where we choose this identification so that $K\cong\Z_2$ is generated by $(-1,1,\dots,1)\in T^{n-1}$. Since the normal tangent space to the orbit is one-dimensional, and hence diffeomorphic to $\R$, it follows that
			\begin{align*}
				M  	& \approx G\times_K \R\\
					&  \approx T^{n-2}\times(S^1\times_{\Z_2} \R) \\
					& \approx T^{n-2}\times \Mb\strut^\mathrm{o}.
			\end{align*}
		
		Thus, we have shown that statement \eqref{IT:THMB_3} implies statement \eqref{IT:THMB_2}.
		
		The manifold $T^{n-2}\times \Mb\strut^\mathrm{o}$ admits no complete metric of positive scalar curvature, since, by \cite[Corollary B2]{GL83}, the manifold $T^{n-1}\times\R$, which double-covers $T^{n-2}\times \Mb\strut^\mathrm{o}$, admits no complete metric of positive scalar curvature. This concludes the proof of Theorem~\ref{theorem_cohomOne_noncpt} in case (N2).
		\end{proof}
		
		
		\begin{remark}
		\label{R:NO_UNIF_PSC}
	Note that in case (N2) the manifold $M$ does not necessarily admit a Riemannian metric of uniformly positive scalar curvature if it admits one with positive scalar curvature. Indeed, consider the standard action of $S^1=\textup{SO}(2)$ on $\R^2$. Then the action of $T^{k-1}\times S^1=T^k$ on $M=T^{k-1}\times \R^2$ has cohomogeneity one and admits a complete $G$-invariant Riemannian metric of positive scalar curvature. Nevertheless, $M$ has no complete metric of uniformly positive scalar curvature  (see \cite[Section 1]{Gr18}).
		\end{remark}

	\begin{proof}[Proof of Theorem~\ref{theorem_cohomOne_cpt} in case (C2)]
		Finally, assume that $M/G$ is diffeomorphic to $[-1,1]$, i.e.\ $M$ can be written as $(G\times_{K_-}D_-)\cup (G\times_{K_+}D_+)$. The metric $g$ of non-negative Ricci curvature constructed in \cite{GZ02} is obtained by gluing two metrics of the form \eqref{eq_MetricRic>=0} on the two halves $G\times_{K_\pm}D_\pm$, where the functions $f_i$ are constructed as described in the proof of Theorem \ref{theorem_cohomOne_cpt} in case (N2) such that they are constant near the gluing area.
		
		Suppose that $M$ admits no $G$-invariant metric of positive scalar curvature. By Proposition \ref{prop_deform} the metric $g$ is Ricci-flat. In this case the formulas \eqref{EQ:RicT} and \eqref{EQ:RicA} show that $\mathfrak{p}_\pm=0$ and $\mathfrak{h}^\perp=\mathfrak{m}_\pm\subseteq Z(\mathfrak{g})$.  We may now conclude the proof as in case (N2). We consider the action of the identity component $G_o$ on $M$. If $M/G_o\approx S^1$, i.e.\ $M$ is a fiber bundle over $M/G_o$ with fiber $G_o/(G_o\cap H)$, then we can argue as in case (N2). If $M/G_o\approx M/G\approx[-1,1]$, then we again replace $G$, $K_\pm$ and $H$ by $G_o$, $G_o\cap K_\pm$ and $G_o\cap H$, respectively, and, as in case (N2), we can assume that $G$ is abelian, $H$ is trivial, and $K_\pm\cong\Z_2$. We again write $G=T^{n-1}=S^1\times\dots \times S^1$ so that $K_+$ is generated by $(-1,1,\dots,1)$ and $K_-$ is generated by $(-1,1,\dots,1)$ or $(1,-1,1,\dots,1)$, depending on whether $K_+=K_-$ or not. In the first case, where $K_+=K_-$, we have
		\begin{align*}
		M 	& \approx (G\times_{K_+} D^1)\cup_{\partial}(G\times_{K_-} D^1)\\
			& \approx (\Mb\times T^{n-2})\cup_{\partial}(\Mb\times T^{n-2})\\
			& \approx K\times T^{n-2}. 
		\end{align*}
		In the second case, where $K_+\neq K_-$ and hence $n\ge3$, we have
		\begin{align*}
		M 	& \approx (G\times_{K_+} D^1)\cup_{\partial}(G\times_{K_-} D^1)\\
			& \approx (\Mb\times S^1\times T^{n-3})\cup_{\partial}(S^1\times\Mb\times T^{n-3})\\
			& \approx A\times T^{n-3}. 
		\end{align*}
		Thus, we have shown that statement \eqref{IT:THMA_3} implies statement \eqref{IT:THMA_2} and thus \eqref{IT:THMA_1}. The rest of the proof now follows from Proposition~\ref{Ric>=0} as in the proof of case (C1) in Theorem~\ref{theorem_cohomOne_cpt}. \end{proof}

\begin{remark}
	In the proof of case (C2) of Theorem~\ref{theorem_cohomOne_cpt} one could alternatively use \cite{LY74} to show that $G_o$ is abelian. Furthermore, to conclude that $M$ is diffeomorphic to $K\times T^{n-2}$ or $A\times T^{n-3}$, one could also argue as follows: A closed, smooth $n$-manifold, $n\geq 3$, with an effective action of $T^{n-1}$ is equivariantly diffeomorphic to a product $T^{n-3}\times N^3$, where $N^3$ is a closed, smooth $3$-manifold with an effective $T^2$ action (see, for example, \cite[Corollary~B]{GGS11}). The possible $N^3$ are listed in \cite[p.\ 221]{Ne68}. In our case, the hypothesis that $M$ does not admit a metric with positive scalar curvature implies that $N^3$ must be diffeomorphic to one of $T^3$, $K\times S^1$, or $A$, and the only possibilities that yield an interval orbit space are $K\times S^1$ or $A$.
\end{remark}

	\bibliographystyle{plainurl}
	\bibliography{ReferencesCohomogeneityOne}

\end{document}